\documentclass{article}

\usepackage{graphicx}
\usepackage{amsthm,xcolor}
\usepackage{amsfonts}
\usepackage{amsmath}
\usepackage{amscd}
\usepackage{amssymb}
\usepackage{alltt}
\usepackage{url}
\usepackage{ellipsis}

\newtheorem{theorem}{Theorem}
\newtheorem*{lemma*}{Lemma}
\newtheorem{lemma}{Lemma}

\newtheorem*{definition*}{Definition}
\newtheorem*{prop*}{Proposition}

\theoremstyle{remark}
\newtheorem{remark}[equation]{Remark}

\newcommand{\uu}{\rule[-0.1ex]{0.4em}{0.15ex}\rule[0ex]{0.08em}{0ex}}
\renewcommand{\_}{\uu}
\newcommand{\bquote}{^\backprime}
\newcommand{\contribchapter}[2]{}
\newcommand{\cmq}{\op{cm}_\phi\bquote}
\newcommand{\op}[1]{\hbox{#1}}
\newcommand{\Hp}{\hbox{Hp}}
\newcommand{\notempty}{\varnothing\!\!\ne}

\title{Simple Matroids and Alfred North Whitehead's theory of dimension (1906)}
\author{Thomas C. Hales}
\date{}

\begin{document}
\maketitle


\def\cl{\op{cl}} 
\def\dimq{\dim_\phi\bquote}

\contribchapter{Simple Matroids and\\Alfred North Whitehead's%
\\Theory of Dimension (1906)}{Thomas C. Hales}
\bigskip

\begin{abstract} We give a correspondence between simple matroids and a
reconstruction of Whitehead's theory of dimension, as found
in \emph{On Mathematical Concepts of the Material World} (MW), published
in 1906~\cite{Whitehead1906}.  In brief, if $(E,\phi)$ is a
geometrical system in the generalized sense of Whitehead, where $E$ is
finite and $\phi$-maximal, then $E$ is a simple matroid.
(\emph{Generalized} here means that Whitehead's three-dimensional
axiom has been replaced with finite-dimensionality.) Within the context
of the generalized geometrical axioms, \emph{$\phi$-maximal} subsets
correspond with simple submatroids.  Conversely, every simple matroid
is a $\phi$-maximal geometrical system in the generalized sense of
Whitehead.
\end{abstract}

\bigskip

Matroids were introduced in 1935 by Hassler Whitney~\cite{Whitney1935}
and independently by Takeo Nakazawa the same
year~\cite{NishimuraKuroda_Nakazawa}.  As originally conceived by
Whitney, matroid theory was a common generalization of the notion of
linear independence in vector spaces and circuits in graphs.  Nakazawa
gave axioms for linear dependence: properties that abstractly describe
sequences of vectors $a_1,\ldots,a_s$ whose product $a_1a_2\cdots
a_s=0$ is zero in the exterior algebra of a vector space.  In 1936,
Nakazawa connected his theory explicitly with Whitney's.  The history
of matroid theory is documented in~\cite{Kung1986SourceBook}, but the
book does not mention MW.

Whitehead's theory is a precursor to their work, although there is no
evidence of any direct influence of Whitehead on Whitney or Nakazawa.
Some social connections between Whitehead and Whitney's circle of
associates at Harvard are mentioned in a remark at the end of this
article, but these might amount to no more than a historical curiosity.

According to Whittaker, MW is ``extremely difficult reading, the
propositions being established by aid of the Peano symbolism: and it
fell completely flat so far as enlisting the attention of other
writers was concerned''~\cite{Whittaker1947}.  Indeed, much of MW is
written in the unwelcoming symbolic notation of Whitehead and
Russell's \emph{Principia Mathematica}.  Yet Whitehead held MW in
surprisingly high esteem. ``Looking backward thirty years later, when
he was a famous philosopher at Harvard, he [Whitehead] told me [Lowe]
that insofar as he could lay claim to any originality, he
thought \emph{`On Mathematical Concepts of the Material World'} was
the most original thing he had done''~\cite[v1~p296]{Lowe1}.
Whitehead also stated that his theory of dimension was one of the main
objects of the memoir.

\bigskip

Familiarity with MW and Oxley's book on matroids is
assumed~\cite{Oxley2011}.  Whitehead's notation will be used for his
theory, and Oxley's notation will be used for matroids.  In what
follows, the notation O.X refers to numbered results in Oxley, and
notation W.X refers to numbered results in Whitehead's MW.

In this article, all matroids are assumed to have a finite ground set.
The definition of a matroid can be presented
in several ways -- in terms of flats, a closure operator, a rank
function, or independent sets.  All of the definitions turn out to be
equivalent, and the equivalence of these definitions (among others
such as circuits) is the starting point of matroid theory.

Whitehead's MW has an analogue of each of these fundamental concepts
of matroid theory.  His name for flats and closure was the
$\phi$-common region; his name for rank was the $\phi$-dimension
number; and his name for independent set was $\phi$-axial.  Both
$\phi$-maximality and the simple matroid property are hereditary under
passage to subsets.  Within the context of his geometrical axioms,
$\phi$-maximal subsets correspond with simple submatroids.  A
significant part of MW is devoted to proving propositions about these
concepts and their relationships.


The analogy between simple matroids and Whitehead's theory is shown in
the following table. (The backquote $\bquote$ denotes function
application and is a peculiarity of Whitehead's notation that can be
disregarded here.)

\begin{center}
\begin{tabular}{ll}
{\bfseries Simple Matroid} & {\bfseries Whitehead} \\
closure operator & $\phi$-common region of $u$, or $\cmq{u}$\\
having the same closure & $\phi$-equivalent\\
rank & $\phi$-dimension number of $u$, or $\dimq{u}$\\
independent set & $\phi$-axial (or $\phi$-prime in a $\phi$-maximal subset)\\
simple matroid & $\phi$-maximal (in the presence of other axioms)
\end{tabular}
\end{center}

\bigskip

Whitehead's theory includes examples that lie outside the scope of
simple matroid theory.  It is only by imposing extra conditions on
Whitehead's theory -- that $E$ is finite and $\phi$-maximal -- that we
obtain a correspondence with simple matroid theory.  Yet even without
these extra conditions, numerous propositions in MW have close
analogues in simple matroid theory.  Theorems~\ref{thm:E}
and \ref{thm:2} give the relationship between simple matroids and a
reconstruction of Whitehead's theory.  We modify Whitehead's original
axioms by replacing his three-dimensional axiom $(\nu)$ with an axiom
of finite-dimensionality $(\nu')$, as follows.

\begin{definition*}[Whitehead's generalized geometrical axioms] Let $E$ be a set, and
let some of the subsets of $E$ be designated as $\phi$-classes.  We
say that $(E,\phi)$ is \emph{geometrical (or a geometrical system) in
the generalized sense of Whitehead} if the following five axioms hold.
\begin{itemize}
\item[$(\lambda)$] $E$ is a $\phi$-class.
\item[$(\mu)$] For all $x\in E$, the singleton $\{x\}$ is a $\phi$-class.
\item [$(\nu')$] The $\phi$-dimension number of $E$ is finite, and 
$\cmq\emptyset=\emptyset$.
\item[$(\pi)$] For all $u\subseteq E$ and for all $\phi$-axial subsets $v$ of 
its $\phi$-common region $\cmq u$,
there exists a subset $w\subseteq E$ such that $v\cup w$ is 
$\phi$-axial and $\phi$-equivalent
to $u$.
\item [$(\rho)$] For all $\phi$-axial subsets $u,v\subseteq E$, if $|u\cap v|\ge2$,
then $u\cup v$ is $\phi$-maximal.
\end{itemize}
\end{definition*}

\begin{remark} 
Whitehead defines $\phi$ to be \emph{geometrical} if it satisfies five
axioms $(\lambda,\mu,\nu,\pi,\rho)\_\Hp\_\phi$ (see \S{W}.10).  The
axioms $\lambda$, $\mu$, $\pi$ and $\rho$ are Whitehead's axioms
$\lambda\_\Hp\_\phi$, $\mu\_\Hp\_\phi$, $\pi\_\Hp\_\phi$, and
$\rho\_\Hp\_\phi$.  ($\Hp\_\phi$ is Whitehead's abbreviation for
\emph{hypothesis on $\phi$-classes}.)

Whitehead's axiom $\nu\_\Hp\_\phi$ is that $E$ is three-dimensional.
He makes this assumption because his interest is three-dimensional
geometry, while observing that ``the reasoning can be applied to
higher dimensions, only more elaborate inductions and an extra axiom
are required.'' Whitehead does not spell out this extra axiom.  We
have replaced his axiom with finite-dimensionality $(\nu')$.
Whitehead proves that if $|E|\ne1$, then $\cmq\emptyset=\emptyset$
(see Prop. W.11.12, whose proof assumes $|E|\ne1$). We have included
this identity as part of axiom $(\nu')$ to avoid exceptions when $E$
is a singleton. The identity is used in the proof of no loops
in the associated matroid.  When finite-dimensionality $(\nu')$
replaces three-dimensionality $(\nu\_\Hp\_\phi)$, we refer to the
geometrical axioms in the \emph{generalized} sense of Whitehead.
\end{remark}


The first theorem states that every finite $\phi$-maximal geometrical
system in the generalized sense of Whitehead is a simple matroid.

\begin{theorem}\label{thm:E}
Let $(E,\phi)$ be geometrical in the generalized sense of Whitehead.  If $E$ is
finite and $\phi$-maximal, then $E$ is a simple matroid whose closure
operator is $u\mapsto\cmq{u}$.  Moreover, a subset of $E$ is a flat of
the matroid iff it is an intersection of $\phi$-classes.  The
independent sets of the matroid are the $\phi$-axial sets (together
with the empty set).  Finally, the rank function on the matroid is
equal to the $\phi$-dimension number on every nonempty set.
\end{theorem}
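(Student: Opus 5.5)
I will verify the closure-operator axioms for $\cl(u):=\cmq u$ on the finite set $E$, then read off the remaining claims. Recall Whitehead's definitions (\S W.10--11): $\cmq u$ is the intersection of all $\phi$-classes containing $u$. A set $u$ is $\phi$-axial if no element $x\in u$ lies in $\cmq(u\setminus\{x\})$. Two sets are $\phi$-equivalent if they have the same common region. $\dimq u$ is the common cardinality of the $\phi$-axial sets equivalent to $u$. Finally, $E$ is $\phi$-maximal if every $\phi$-axial subset of $E$ is a $\phi$-class; the property is hereditary.

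\textbf{Step 1: closure axioms.} Extensivity, monotonicity and idempotence are immediate because $\cmq u$ is an intersection of classes. Axiom $(\lambda)$ makes the intersection nonempty, so $\cmq u\subseteq E$. This also yields the flats statement: the fixed points of $\cl$ are exactly the intersections of $\phi$-classes.

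\textbf{Step 2: the exchange property.} This is the main obstacle. I must show that if $y\in\cmq(u\cup\{x\})\setminus\cmq u$, then $x\in\cmq(u\cup\{y\})$. I would follow the route of MW's propositions on $\phi$-axial sets.

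First, using $(\pi)$ applied to $u$ with $v=\emptyset$, I pick a $\phi$-axial $a\subseteq u$-region equivalent to $u$. Here I need $\emptyset$ to be axial, which holds since $\cmq\emptyset=\emptyset$ by $(\nu')$. I then aim to choose $a\subseteq u$ itself, by a standard extraction argument using the definition of axial and finiteness.

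Next, $x\notin\cmq a$, otherwise $y\in\cmq u$. So $a\cup\{x\}$ is axial. This needs a lemma of the form ``$a$ axial and $x\notin\cmq a$ imply $a\cup\{x\}$ axial''. I expect that lemma to come from $(\pi)$ together with $\phi$-maximality: the axial sets are classes, and $(\rho)$ supplies a uniqueness statement (two classes sharing two points generate a maximal, hence well-behaved, union).

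Similarly, $a\cup\{y\}$ is axial and $\cmq(a\cup\{y\})\subseteq\cmq(a\cup\{x\})$. Applying $(\pi)$ to $a\cup\{x\}$ with $v=a\cup\{y\}$ gives $w$ such that $a\cup\{y\}\cup w$ is axial and equivalent to $a\cup\{x\}$. Finite-dimensionality and invariance of $\dimq$ among equivalent axial sets (W.11) force $|w|$ to be $0$, with $w=\emptyset$. Hence $x\in\cmq(a\cup\{y\})\subseteq\cmq(u\cup\{y\})$.

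The delicate point is exactly where $\phi$-maximality and $(\rho)$ enter. Without them, the invariance of $\dimq$ may fail. I would first try to derive invariance of cardinality of equivalent axial sets from $(\pi)$ alone, by a Steinitz-type replacement, and fall back on MW's proofs if that fails.

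\textbf{Step 3: identify independent sets, rank, and simplicity.} In a closure-defined matroid, $I$ is independent iff $x\notin\cl(I\setminus\{x\})$ for all $x\in I$. This is verbatim the $\phi$-axial condition, with $\emptyset$ trivially included. The rank $r(u)$ is the size of a maximal independent subset of $u$, which by Step 2 is axial and equivalent to $u$; hence $r(u)=\dimq u$ for nonempty $u$.

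Simplicity follows from two facts. There are no loops, because $\cl(\emptyset)=\cmq\emptyset=\emptyset$ by $(\nu')$. There are no parallel pairs, because by $(\mu)$ the singleton $\{x\}$ is a class, so $\cmq\{x\}=\{x\}$ and $y\notin\cl\{x\}$ for $y\neq x$.
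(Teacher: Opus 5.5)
\textbf{The gap: the key lemma of Step~2 is unproved, and two definitions are misstated.} Your Step~2 depends on the lemma ``$a$ $\phi$-axial and $x\notin\cmq a$ imply $a\cup\{x\}$ $\phi$-axial''. You leave it as an expectation, and the route you sketch for it cannot work. The condition that no $x\in u$ lies in $\cmq(u\setminus\{x\})$ characterizes $\phi$-\emph{prime} sets (W.5.231). $\phi$-axial means $\phi$-prime of largest cardinality among $\phi$-prime sets with the same common region. $\phi$-maximality of $u$ means (as used in Lemma~1) that $\phi$-prime subsets of $u$ equivalent to $u$ are $\phi$-axial, not that axial sets are $\phi$-classes. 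With your definition, $U_{2,3}$ with flats as $\phi$-classes would not be $\phi$-maximal, since an independent pair is not a flat; this contradicts Theorem~2(5). Reading axial as prime, your lemma says ``independent and outside the closure stays independent''. Given the other closure axioms and finiteness, that is equivalent to the exchange property itself, so deferring it leaves all of Step~2 open. Nor can $(\rho)$ help: the paper never uses it.

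\textbf{What is missing: Lemma~1 and the heredity of $\phi$-maximality (W.13.11).} You assert heredity as if it were free. Lemma~1 runs as follows. Suppose $v$ is $\phi$-prime, $x\notin\cmq v$, and $v\cup\{x\}$ is $\phi$-maximal. Take a $\phi$-prime $v'\subseteq v\cup\{x\}$ equivalent to $v\cup\{x\}$; it is axial by maximality. Then $|v|\le\dimq v<\dimq(v\cup\{x\})=|v'|\le|v|+1$, with strictness from W.12.21 and W.12.23. So all are equalities, and $v$ and $v'=v\cup\{x\}$ are axial.

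Applying this to $a\cup\{x\}$ and $a\cup\{y\}$ requires those sets to be $\phi$-maximal. In the generalized setting, heredity needs an induction. Take a non-maximal nonempty $u'\subseteq E$ of largest dimension. For any prime $v\subseteq u'$ equivalent to $u'$ and any $x\in E\setminus\cmq v$, the set $v\cup\{x\}$ has larger dimension and is therefore maximal. Lemma~1 then makes $v$ axial, so $u'$ is maximal after all, a contradiction.

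Heredity gives $(\tau)$: prime $=$ axial in $E$. You also need $(\tau)$ to make your extracted $a\subseteq u$ axial. In Step~3 you need it again to turn the closure criterion, which yields $\phi$-prime sets, into the stated identification of independent sets with $\phi$-axial sets.

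\textbf{What goes through once these are supplied.} The $(\pi)$ step gives $|a\cup\{y\}\cup w|=\dimq(a\cup\{x\})=|a|+1$, so $w\subseteq a\cup\{y\}$ (not necessarily $w=\emptyset$). Your simplicity argument via $(\mu)$ and $(\nu')$ is fine. One small repair: $(\pi)$ cannot be applied with $v=\emptyset$, because $\phi$-prime sets are nonempty. Extract $a$ by finiteness instead, and handle $u=\emptyset$ directly, where $y\in\cmq\{x\}=\{x\}$.
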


The proof of the theorem relies on new inductive proofs of some
propositions that were limited in Whitehead to three dimensions.  The
proof of the theorem is given below.  First, we make a series of remarks.

\begin{remark}
Two different families of $\phi$-classes on $E$ determine the same
matroid iff they determine the same collection of flats; that is, they
both give the same collection of intersections.
\end{remark}

\begin{remark}
In an elucidatory note, Whitehead indicates that $\phi$-classes are
intended to generalize the geometrical idea of flatness in Euclidean
space (see W.3.12).  The use of the term \emph{flat} in both Whitehead
and matroid theory points to a shared intuitive understanding of what
the theory is about.
\end{remark}

\begin{remark}
In Whitehead's terminology, a nonempty set is $\phi$-prime if it does
not have a proper subset with the same $\phi$-common region.  A
$\phi$-prime set is $\phi$-axial if it has the largest cardinality
among $\phi$-prime sets with the same $\phi$-common region.  The
$\phi$-dimension number of a set is defined to be the cardinality of a
$\phi$-axial set with the same $\phi$-common region.  Whitehead's
theory includes examples such as a pair of skew lines: a set with two
elements whose affine closure is three-dimensional; a pair of skew
lines is $\phi$-prime but not $\phi$-axial.  In a matroid, there is no
distinction between (the analogues of) $\phi$-prime and $\phi$-axial;
that is, all bases of the same flat have the same cardinality.
Whitehead's $\phi$-maximal condition rules out examples such as skew
lines to ensure a match between the $\phi$-dimension number and the
cardinality of $\phi$-prime sets.
\end{remark}

\begin{remark} 
Whitehead's definitions can also be applied to infinite sets or even
proper classes.  If $E$ is finite or infinite, and if every subset of
$E$ is declared to be a $\phi$-class, then the $\phi$-dimension number
of every nonempty subset $u\subseteq E$ is equal to its cardinality.
\end{remark}

\begin{remark}
Since Whitehead does not assume that $E$ is finite, simple matroids of
finite rank might be a better fit to Whitehead's geometrical axioms
than simple matroids on a finite ground set.  This is a matter for
future investigation.  However, we confine our attention to finite
ground sets.
\end{remark}

\begin{remark}
To give more historical context, in his earlier \emph{A Treatise on
Universal Algebra} (1898), Whitehead gave an exposition of properties
of linear dependence and independence, including dependence and
independence in the exterior
algebra~\cite[\S64,\S91--\S96]{Whitehead1898}. The propositions
include the statement that the maximum number $n$ of independent
elements is one more than the dimension of the corresponding
projective space (called the ``positional manifold'') and that any set
of $n$ (but no fewer) independent elements span.
\end{remark}

\begin{remark} 
Assuming the geometrical axioms, by Prop. W. 12.12, every nonempty
subset of $E$ has a set of $\phi$-axes.  This implies that the
$\phi$-dimension number is defined (and is a cardinal number) for
every nonempty subset of $E$. (This remark is needed, because
Whitehead's general specification of $\phi$-dimension skirts the issue
of its definedness.)
\end{remark}

\begin{remark}
Our proofs will use Whitehead's propositions, which are consequences
of his five axioms of geometry, but it can be checked that whenever we
invoke Whitehead's propositions, the proofs go through using only
$\lambda,\mu,\nu',\pi,\rho$.  In fact, we will never need to use the
axiom $\rho$.  (However, Theorem~\ref{thm:E} assumes that $E$ is
$\phi$-maximal, which is a strong form of axiom $\rho$.)  If the
proposition number is W.X with $X<10$, then the proposition occurs
before the statement of the axioms in section 10 of MW, and there is
nothing to check.  The propositions in the early sections of Whitehead
include results that correspond in matroid theory with the basic
properties of the closure operator of matroids, the hereditary
property of independent sets, the relation between the independent
sets and closure, and the simplicity property.
\end{remark}

We will repeatedly use the following two propositions from Whitehead,
which give relations between the $\phi$-dimension number and
$\phi$-common region.  

\begin{prop*} {\bfseries W.12.21.}\quad 
Let $(E,\phi)$ be a geometrical system in the generalized sense of Whitehead.
Assume that $u\subseteq E$, $\notempty{v}\subseteq E$, and
$\cmq{v}\subseteq \cmq{u}$.  Then $\dimq{v}\le \dimq{u}$.
\end{prop*}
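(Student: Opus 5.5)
The plan is to use axiom $(\pi)$ to extend an axial basis of $\cmq{v}$ to an axial set equivalent to $u$, and then compare cardinalities. The facts about $\phi$-common regions I expect to need are the elementary ones from the early sections of MW, W.X with $X<10$: $\cmq{}$ is extensive, monotone and idempotent, and two sets are $\phi$-equivalent iff they have the same $\phi$-common region.

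First I would dispose of the degenerate case. If $u=\emptyset$, then $\cmq{v}\subseteq\cmq\emptyset=\emptyset$ by $(\nu')$. But $v\subseteq\cmq{v}$ and $v\ne\emptyset$, which is a contradiction, so $u$ is nonempty. Then both $\dimq{u}$ and $\dimq{v}$ are defined, by the remark citing W.12.12.

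Next, choose a $\phi$-axial set $a$ with $\cmq{a}=\cmq{v}$, using W.12.12 again; then $|a|=\dimq{v}$ by definition. Since $a\subseteq\cmq{a}=\cmq{v}\subseteq\cmq{u}$, the set $a$ is a $\phi$-axial subset of $\cmq{u}$. Axiom $(\pi)$ then gives $w\subseteq E$ such that $a\cup w$ is $\phi$-axial and $\phi$-equivalent to $u$. So $a\cup w$ is a $\phi$-axial set with the same $\phi$-common region as $u$. By the definition of $\phi$-dimension number, $\dimq{u}=|a\cup w|\ge|a|=\dimq{v}$, which proves the proposition.

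The main obstacle is that this last equality needs $\dimq{u}$ to be well defined. The definition says $\dimq{u}$ is the cardinality of "a" $\phi$-axial set with the same common region, so all such axial sets must have equal cardinality. This follows from the definition of $\phi$-axial: such a set is $\phi$-prime of the largest cardinality among $\phi$-prime sets with that common region, so any two axial sets for the same region have the same (maximal) cardinality. With this settled, cardinal comparison of $a\subseteq a\cup w$ finishes the argument, and no finiteness beyond $(\nu')$ is needed.
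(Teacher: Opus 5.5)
Your proof is correct. The paper gives no argument of its own for W.12.21 (it only says ``See MW''), so there is no in-paper proof to compare against. Your route is the natural one, since $(\pi)$ is exactly an extension axiom for $\phi$-axial sets:
\begin{itemize}
\item[(a)] rule out $u=\emptyset$ using $\cmq\emptyset=\emptyset$ and $v\subseteq\cmq{v}$;
\item[(b)] take a $\phi$-axial $a$ that is $\phi$-equivalent to $v$, and note $a\subseteq\cmq{v}\subseteq\cmq{u}$;
\item[(c)] extend $a$ by axiom $(\pi)$ to a $\phi$-axial set $a\cup w$ that is $\phi$-equivalent to $u$;
\item[(d)] compare cardinalities.
\end{itemize}
Your well-definedness argument is also sound: any two $\phi$-axial sets with the same common region have the same cardinality, namely the maximum cardinality of $\phi$-prime sets with that region.

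What your write-up adds is an explicit check, for this proposition, of the paper's blanket claim that the cited MW results survive under the generalized axioms. You use only $(\pi)$, extensivity (W.4.25), the identity $\cmq\emptyset=\emptyset$, and the existence of axes (W.12.12). That last ingredient can itself be derived from $(\pi)$. Apply $(\pi)$ to a singleton $\{x\}\subseteq v$, which is $\phi$-axial: it is $\phi$-prime because $\cmq\emptyset=\emptyset$, and $\cmq\{x\}=\{x\}$ by $(\mu)$. So the argument is fully self-contained.

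One small correction to your closing sentence: the finite-dimensionality clause of $(\nu')$ is not used at all, only the identity $\cmq\emptyset=\emptyset$. Axiom $(\rho)$ is not used either.
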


\begin{proof} See MW.
\end{proof}

In particular, $\dimq{E}<\infty$ is an upper bound on the
$\phi$-dimension number of nonempty subsets of $E$.

\begin{prop*}{\bfseries W.12.23.}\quad 
Let $(E,\phi)$ be a geometrical system in the generalized sense of Whitehead.
Assume that $u\subseteq E$, $\notempty{v}\subseteq E$, and
$\cmq{v}\subseteq \cmq{u}$.  Then 
\[
\cmq{v}= \cmq{u}\quad \hbox{iff}\quad\dimq{v}= \dimq{u}.
\]
\end{prop*}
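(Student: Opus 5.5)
The plan is to prove the two directions separately. The forward direction follows from the definitions. The reverse direction uses axiom $(\pi)$ to extend an axial set of $v$ to an axial set of $u$.

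First a preliminary point: $u$ is nonempty. Since $v\subseteq\cmq v$ (the common region contains the set) and $v\ne\emptyset$, we get $\cmq u\supseteq\cmq v\ne\emptyset$. By $(\nu')$, $\cmq\emptyset=\emptyset$, so $u\ne\emptyset$. Hence both $\dimq u$ and $\dimq v$ are defined, by the remark citing Prop. W.12.12. By W.12.21 and $(\nu')$, both are finite cardinals bounded by $\dimq E$.

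\emph{($\Rightarrow$)} By definition, $\dimq u$ is the cardinality of a $\phi$-axial set with the same $\phi$-common region as $u$. A $\phi$-axial set is a $\phi$-prime set of largest cardinality among $\phi$-prime sets with that common region. So this cardinality depends only on the region $\cmq u$, not on $u$ itself. If $\cmq v=\cmq u$, the same family of $\phi$-prime sets is being maximized, so $\dimq v=\dimq u$.

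\emph{($\Leftarrow$)} I prove the contrapositive. Suppose $\cmq v\subsetneq\cmq u$.
\begin{itemize}
\item Choose a $\phi$-axial set $a$ with $\cmq a=\cmq v$ and $|a|=\dimq v$.
\item Then $a\subseteq\cmq a=\cmq v\subseteq\cmq u$, so $a$ is a $\phi$-axial subset of $\cmq u$.
\item Axiom $(\pi)$ gives $w\subseteq E$ such that $a\cup w$ is $\phi$-axial and $\phi$-equivalent to $u$, i.e.\ $\cmq(a\cup w)=\cmq u$. Hence $\dimq u=|a\cup w|$.
\item If $w\subseteq a$, then $a\cup w=a$ and $\cmq u=\cmq a=\cmq v$, contradicting the assumption. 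So $w\not\subseteq a$.
\item Therefore $|a\cup w|>|a|$, since both are finite by the bound above. This gives $\dimq u>\dimq v$, so the dimensions differ.
\end{itemize}

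The main point needing care is the claim in the forward direction that $\phi$-dimension is an invariant of the common region. This requires reading Whitehead's definition as stated in the remark, namely that all $\phi$-axial sets for a given region have the same, maximal cardinality. Finiteness from $(\nu')$ is also needed, to turn the strict inclusion $a\subsetneq a\cup w$ into a strict inequality of cardinals.
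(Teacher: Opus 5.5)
Your proof is correct. The paper gives no argument of its own for W.12.23: it writes ``See MW'' and relies on the blanket remark that Whitehead's proofs go through under $\lambda,\mu,\nu',\pi,\rho$. Your proposal therefore supplies the verification the paper leaves implicit.

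Your forward direction is the same definitional invariance the paper uses later, when it asserts $\dimq(u)=\dimq(\cl(u))$ ``by the definitions of $\phi$-dimension number and $\phi$-equivalence''. Your converse extends a $\phi$-axial set $a$ for $v$ via $(\pi)$ to a $\phi$-axial set $a\cup w$ for $u$. This makes explicit exactly where $(\nu')$ is needed, since for infinite sets the strict inclusion $a\subsetneq a\cup w$ would not give $|a|<|a\cup w|$.

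One small refinement: your finiteness bound cites W.12.21, which the paper also only cites. The same $(\pi)$-extension gives that bound directly:
\begin{itemize}
\item take $b$ $\phi$-axial with $\cmq b=\cmq u$;
\item note $b\subseteq E\subseteq\cmq E$;
\item extend $b$ by $(\pi)$ to a $\phi$-axial set $b\cup w'$ that is $\phi$-equivalent to $E$;
\item conclude $\dimq u=|b|\le|b\cup w'|=\dimq E<\infty$.
\end{itemize}
With this, your argument rests only on $(\nu')$, $(\pi)$, W.4.25 and the existence of $\phi$-axes (W.12.12).
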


\begin{proof} See MW.
\end{proof}

Before proving the theorem, we give two key propositions (Props. W.12.37 and
W.13.11) that appear in MW whose proofs need to be updated to our
assumption context.  As mentioned above, Whitehead observed ``the
reasoning can be applied to higher dimensions, only more elaborate
inductions~\ldots\ are required.'' Here we supply the more elaborate
inductions.  We add Lemma~\ref{lemma}, which is not found in
Whitehead, that will be used three times to update Whitehead's
proofs. The lemma serves as an induction step in other proofs. Prop. W.12.37
gives the existence of a $\phi$-prime subset of a given nonempty
set. Prop. W.13.11 is the hereditary property of $\phi$-maximal subsets.
It will be used to show in the proof of
Theorem~\ref{thm:E} that the $\phi$-prime sets are in fact
$\phi$-axial.

\begin{prop*} {\bfseries W.12.37.}\quad
Let $(E,\phi)$ be a geometrical system in the generalized sense of Whitehead.  Let
$\emptyset\ne u\subseteq E$.  Then there exists a (finite)
$\phi$-prime set $u'\subseteq u$ that is $\phi$-equivalent to $u$.
\end{prop*}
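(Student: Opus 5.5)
We will argue by strong induction on the $\phi$-dimension number $\dimq{u}$, which is finite for every nonempty $u$ by W.12.21 and $(\nu')$. The idea is to shrink $u$ one step at a time. A proper subset with the same common region may be traded for $u$ without changing anything. A subset with a strictly smaller common region has strictly smaller dimension, and W.12.23 is the tool that converts the first kind of statement into the second.

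First, the reduction to finite sets. Take any nonempty $u$. By the remark following W.12.12, the common region $\cmq u$ has a set of $\phi$-axes $a$, which is finite because $|a|=\dimq u<\infty$. By axiom $(\pi)$ applied with $v=\emptyset$, there is a $\phi$-axial set $w$ that is $\phi$-equivalent to $u$. That set need not lie inside $u$, so we need a variant. For each $x\in w\subseteq\cmq u$, we want a finite subset $u_x\subseteq u$ with $x\in\cmq{u_x}$. Then $u_0=\bigcup_x u_x$ is a finite subset of $u$ with $\cmq w\subseteq\cmq{u_0}\subseteq\cmq u$, so $u_0$ is $\phi$-equivalent to $u$.

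Finding $u_x$ is the step I expect to be the main obstacle. I would try the induction on dimension here as well. Choose $y\in u$. If $\cmq{\{y\}}=\cmq u$, we are done. Otherwise, the intermediate region $\cmq{\{y\}}\subsetneq\cmq u$ has smaller dimension. Adding a point of $u$ outside the current region strictly increases the dimension by W.12.23, and W.12.21 bounds the dimension above by $\dimq u$. So after at most $\dimq u$ additions, the region must equal $\cmq u$. This argument builds $u_0$ directly, without passing through $u_x$: starting from $y$, greedily add points of $u$ not yet in the current common region. The common regions form a strictly increasing chain, so the dimensions strictly increase by W.12.23 and are bounded by $\dimq u$ by W.12.21. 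The process therefore terminates with a finite $u_0\subseteq u$ whose common region contains $u$, hence contains $\cmq u$. Here we use the monotonicity and idempotence of $\cmq{}$, which are the basic closure properties from the early sections of MW.

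Second, we extract a prime subset. Among the nonempty subsets of the finite set $u_0$ that are $\phi$-equivalent to $u_0$, choose one, $u'$, of minimal cardinality. By minimality, no proper subset of $u'$ has the same common region; nonemptiness also needs checking, since if $u'$ became empty we would need $\cmq\emptyset=\cmq u\ne\emptyset$, which is excluded by $(\nu')$. Hence $u'$ is $\phi$-prime. It is $\phi$-equivalent to $u_0$, and hence to $u$, and $u'\subseteq u$ is finite, as required. The delicate point to verify is the claim in the greedy step that adding a point $z\notin\cmq{s}$ gives $\cmq s\subsetneq\cmq{s\cup\{z\}}$. This holds because $z$ lies in the latter region but not the former, and that strictness is exactly what W.12.23 turns into a dimension increase.
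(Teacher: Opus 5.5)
Your proof is correct and is essentially the paper's: the greedy step (adding a point of $u$ outside the current common region strictly raises the $\phi$-dimension by W.12.21/W.12.23, which is bounded by $\dimq u$), followed by extracting a minimal-cardinality $\phi$-equivalent subset, is the iterative form of the paper's argument, where the paper instead takes a finite subset of $u$ of maximal dimension and shows it is $\phi$-equivalent to $u$. You should delete the opening detour through $(\pi)$ and the sets $u_x$, since $(\pi)$ cannot be applied with $v=\emptyset$ ($\phi$-axial sets are nonempty by definition), but your final argument does not depend on that detour.
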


\begin{proof} Assume $\emptyset\ne u\subseteq E$.  We claim that if
$v\subseteq u$ is any finite subset of $u$ of maximal dimension among
finite subsets of $u$, then $v$ is $\phi$-equivalent
to $u$.  We prove the contrapositive of the claim.  Assume that $v$ is
a finite subset of $u$ that is not $\phi$-equivalent to $u$.  Pick
$x\in{u}\setminus\cmq{v}$ (using Props. W.4.21, W.4.25).  Then
$\cmq{v}\subsetneq\cmq(v\cup\{x\})$ and
$\dimq{v}<\dimq(v\cup\{x\}) \le\dimq u$ (by Props. W.12.21, W.12.23).
This shows that $v$ does not have maximal dimension among finite
subsets of $u$.  This proves the
claim.

Let $v$ be a finite subset of $u$ that is $\phi$-equivalent to $u$.
Let $u'\subseteq v$ be a subset of $v$ of minimal cardinality among
those subsets of $v$ that are $\phi$-equivalent to $u$.  Then $u'$ is
clearly $\phi$-prime and $\phi$-equivalent to $u$.
\end{proof}

\begin{lemma}\label{lemma}
Let $(E,\phi)$ be a geometrical system in the generalized sense of Whitehead.  Let
$v\subseteq E$ be $\phi$-prime, and let $x\in E$ be such that
$v\cup\{x\}$ is $\phi$-maximal and such that $x\not\in\cmq{v}$.  Then
$v$ and $v\cup\{x\}$ are $\phi$-axial.  Also
$\dimq(v\cup\{x\})=\dimq{v}+1$.
\end{lemma}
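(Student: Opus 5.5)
The plan is to reduce everything to counting cardinalities, using the defining property of $\phi$-maximality: every $\phi$-prime subset of a $\phi$-maximal set is $\phi$-axial. I take this as Whitehead's definition (it is how the earlier remarks use the notion), and I do not re-derive it. Since $v\subseteq v\cup\{x\}$ and $v$ is $\phi$-prime, $v$ is $\phi$-axial at once. By the definition of the $\phi$-dimension number, $\dimq{v}=|v|$. It then remains to show that $v\cup\{x\}$ is $\phi$-prime. Once that is done, it is $\phi$-axial for the same reason, and
\[
\dimq(v\cup\{x\})=|v\cup\{x\}|=|v|+1=\dimq{v}+1,
\]
where $x\notin v$ because $v\subseteq\cmq{v}$ and $x\notin\cmq{v}$.

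First I record the strict dimension increase. Monotonicity of the common region (the early propositions W.4.x) gives $\cmq{v}\subseteq\cmq(v\cup\{x\})$. This inclusion is strict, because $x\in\cmq(v\cup\{x\})\setminus\cmq{v}$. Props. W.12.21 and W.12.23 then give $\dimq(v\cup\{x\})>\dimq{v}=|v|$, so $\dimq(v\cup\{x\})\ge|v|+1$.

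To prove that $v\cup\{x\}$ is $\phi$-prime, suppose for contradiction that a proper subset $w\subsetneq v\cup\{x\}$ has $\cmq{w}=\cmq(v\cup\{x\})$. There are two cases.
\begin{itemize}
\item If $x\notin w$, then $w\subseteq v$. Hence $\cmq{w}\subseteq\cmq{v}$, which does not contain $x$. This contradicts $x\in\cmq(v\cup\{x\})=\cmq{w}$.
\item If $x\in w$, then $w=v'\cup\{x\}$ with $v'\subsetneq v$, so $|w|\le|v|$. By Prop. W.12.37, $w$ contains a $\phi$-prime subset $w'$ that is $\phi$-equivalent to $w$, and hence to $v\cup\{x\}$. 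Since $w'\subseteq v\cup\{x\}$, $\phi$-maximality makes $w'$ $\phi$-axial. Therefore
\[
\dimq(v\cup\{x\})=\dimq{w'}=|w'|\le|v|,
\]
which contradicts the bound $\dimq(v\cup\{x\})\ge|v|+1$.
\end{itemize}
Hence $v\cup\{x\}$ is $\phi$-prime, and the conclusion above follows.

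The main obstacle is bookkeeping rather than ideas. One must check that the definition of $\phi$-maximal really applies to arbitrary $\phi$-prime subsets, including subsets of the smaller sets $w$ and $v$. One must also check that the $\phi$-dimension number is read off as the cardinality of any $\phi$-axial set with the same common region. If Whitehead's $\phi$-maximality is phrased differently (for example, as ``$\phi$-prime implies $\phi$-axial'' only for the set itself), I would first invoke the hereditary behaviour of the relevant notions available before Prop. W.13.11, or reformulate the two uses above accordingly.
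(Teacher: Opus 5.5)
There is a gap, and it sits in your reading of $\phi$-maximality. As the paper uses the notion, $u$ is $\phi$-maximal when every $\phi$-prime subset of $u$ \emph{that is $\phi$-equivalent to $u$} is $\phi$-axial. It says nothing directly about $\phi$-prime subsets with a smaller common region. You can see this in two places:
\begin{itemize}
\item the proof of W.13.11 establishes $\phi$-maximality of $u'$ by checking only $\phi$-prime subsets that are $\phi$-equivalent to $u'$;
\item property $(\tau)$ in Theorem~\ref{thm:E} needs W.13.11.
\end{itemize}
Under your stronger reading, W.13.11 would be immediate, and so would the first conclusion of the lemma. Since $x\notin\cmq{v}$, the set $v$ is not $\phi$-equivalent to $v\cup\{x\}$. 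So your step ``$v$ is $\phi$-axial at once'' is unjustified, and so is $\dimq{v}=|v|$, which you use twice. Deriving the axiality of $v$ from the maximality of $v\cup\{x\}$ is exactly what the lemma is used for in the proof of W.13.11. Your fallback of invoking hereditary behaviour before W.13.11 would therefore be circular.

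The rest of your argument survives, and the repair is short. Because $v$ is $\phi$-prime, and a $\phi$-axial set has maximal cardinality among $\phi$-prime sets with the same common region, you do know $|v|\le\dimq{v}$. This still gives $\dimq(v\cup\{x\})\ge|v|+1$. Your two later uses of maximality are legitimate, since $w'$ and $v\cup\{x\}$ are $\phi$-prime subsets of $v\cup\{x\}$ that are $\phi$-equivalent to it. So you correctly obtain that $v\cup\{x\}$ is $\phi$-axial with $\dimq(v\cup\{x\})=|v|+1$. What is missing is the squeeze
\[
|v|\le\dimq{v}<\dimq(v\cup\{x\})=|v|+1,
\]
which forces $\dimq{v}=|v|$. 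That gives both that $v$ is $\phi$-axial and the dimension formula. This chain of inequalities is essentially the paper's whole proof. The paper also avoids your case split: for any $\phi$-prime $v'\subseteq v\cup\{x\}$ that is $\phi$-equivalent to $v\cup\{x\}$, maximality gives $|v'|=\dimq(v\cup\{x\})\ge|v|+1$, so $v'=v\cup\{x\}$ directly.
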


\begin{proof}
Choose $v'\subseteq{v}\cup\{x\}$ that is $\phi$-equivalent to
$v\cup\{x\}$ and $\phi$-prime.  By the definition of $\phi$-maximal,
we have that $v'$ is $\phi$-axial.  Also, $v'$ is finite.  We have
\[
|v|\le\dimq{v}<\dimq(v\cup\{x\})=\dimq{v'}=|v'|\le|v|+1.
\]
All weak inequalities must be equalities. This implies that
$|v|=\dimq{v}$, that $v'=v\cup\{x\}$, and that
$|v|+1=|v\cup\{x\}|=\dimq(v\cup\{x\})$.  
The conclusion follows.
\end{proof}

\begin{prop*} {\bfseries W.13.11.}\quad
Let $(E,\phi)$ be a geometrical system in the generalized sense of Whitehead.
Every nonempty subset of a $\phi$-maximal set is also $\phi$-maximal.
\end{prop*}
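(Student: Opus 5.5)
The plan is a downward induction on $\phi$-dimension inside the ambient $\phi$-maximal set, with Lemma~\ref{lemma} as the induction step. I read the definition as used in the proof of Lemma~\ref{lemma}: a nonempty set $m$ is $\phi$-maximal iff every $\phi$-prime subset of $m$ that is $\phi$-equivalent to $m$ is $\phi$-axial.

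Let $u$ be $\phi$-maximal and let $\emptyset\ne w\subseteq u$. Every $\phi$-prime subset of $w$ that is $\phi$-equivalent to $w$ is in particular a $\phi$-prime subset of $u$. So it suffices to prove the following claim.

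\emph{Claim:} every $\phi$-prime $v\subseteq u$ is $\phi$-axial.

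For any such $v$, $\cmq v\subseteq\cmq u$ by monotonicity of the common region, so $\dimq v\le\dimq u<\infty$ by Prop.~W.12.21 and axiom $(\nu')$. I will prove the claim by downward induction on the integer $\dimq u-\dimq v\ge 0$.

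\emph{Base case} ($\dimq v=\dimq u$). Prop.~W.12.23 gives $\cmq v=\cmq u$. Thus $v$ is a $\phi$-prime subset of $u$ that is $\phi$-equivalent to $u$, and it is $\phi$-axial by the $\phi$-maximality of $u$.

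\emph{Inductive step} ($\dimq v<\dimq u$). Then $\cmq v\ne\cmq u$ by Prop.~W.12.23. As in the proof of Prop.~W.12.37 (using Props.~W.4.21, W.4.25), pick $x\in u\setminus\cmq v$. We want to apply Lemma~\ref{lemma} to $v$ and $x$, so we must check that $v\cup\{x\}$ is $\phi$-maximal. Let $p\subseteq v\cup\{x\}$ be $\phi$-prime and $\phi$-equivalent to $v\cup\{x\}$. Since $x\notin\cmq v$, we have $\cmq v\subsetneq\cmq(v\cup\{x\})=\cmq p$. Props.~W.12.21 and W.12.23 then give $\dimq p>\dimq v$. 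Also $p\subseteq u$ and $p$ is $\phi$-prime, so the induction hypothesis applies and $p$ is $\phi$-axial. Hence $v\cup\{x\}$ is $\phi$-maximal, and Lemma~\ref{lemma} yields that $v$ is $\phi$-axial. This completes the induction.

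The main obstacle is the inductive step: a direct approach would need some exchange property to show that $v\cup\{x\}$ is itself $\phi$-prime. The point of inducting \emph{downward} on dimension is that we never need this. We only need the $\phi$-maximality of $v\cup\{x\}$, and that follows from the induction hypothesis because every prime set equivalent to $v\cup\{x\}$ has strictly larger dimension than $v$. Lemma~\ref{lemma} then supplies the conclusion for $v$ without any exchange argument. Finiteness of $\dimq u$, from $(\nu')$ via Prop.~W.12.21, is what makes the downward induction well founded.
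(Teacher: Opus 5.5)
Your proof is correct and follows essentially the same route as the paper, which argues by contradiction from a non-$\phi$-maximal nonempty subset $u'\subseteq u$ of largest $\phi$-dimension; this is the extremal-counterexample form of your downward induction, and your base case corresponds to its observation that $\dimq u'<\dimq u$. The key step is the same in both: pick $x\in u\setminus\cmq v$, obtain the $\phi$-maximality of $v\cup\{x\}$ from its larger dimension, and conclude via Lemma~\ref{lemma} that $v$ is $\phi$-axial.
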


\begin{proof}
Assume $u$ is $\phi$-maximal.  We argue by contradiction, and assume
that $u$ has a nonempty subset that is not $\phi$-maximal.  Among all
such nonempty subsets of $u$ we choose a subset $u'$ whose
$\phi$-dimension number is as large as possible.  We have
$\dimq{u'}<\dimq{u}$ (by Props. W.12.21, W.12.23, W.4.21).  Let $v$ be
any subset of $u'$ that is $\phi$-prime and $\phi$-equivalent to $u'$
(which exists by W.12.37). Let $x\in{u}\setminus\cmq{v}$ (this
exists). Then $\dimq{v}<\dimq(v\cup\{x\})$, and by the dimension
maximizing choice of $u'$ (and $v$), we have that $v\cup\{x\}$ is
$\phi$-maximal.  By Lemma~\ref{lemma}, we have that $v$ is
$\phi$-axial.  This shows that $u'$ is $\phi$-maximal, which is
contrary to the choice of $u'$.
\end{proof}

\begin{proof} \emph{(Theorem. \ref{thm:E}).}\quad
Let $(E,\phi)$ be geometrical in the generalized sense of Whitehead.  Assume that
$E$ is finite and $\phi$-maximal.  It follows from Prop. W.13.11 that
every nonempty subset of $E$ is $\phi$-maximal.  In particular, a
subset of $E$ is $\phi$-prime iff it is $\phi$-axial.  We refer to
this particular fact as property $(\tau)$.

Let ${\mathcal I}$ be the collection of $\phi$-prime (or $\phi$-axial)
subsets of $E$ together
with the empty set.  To show that $(E,{\mathcal I})$ is a matroid, we verify
the three axioms \S{O}.1.1 of independent sets.

\begin{itemize} 
\item[{\bfseries I1}] $\emptyset\in{\mathcal I}$: 
This is true by the choice of ${\mathcal I}$.
\item[{\bfseries I2}] We claim that if $I\in{\mathcal I}$ and $I'\subseteq I$, 
then $I'\in{\mathcal I}$.
This is Whitehead's Proposition W.5.23. 
(Whitehead calls Prop. W.5.23 and two closure laws ``the foundation of the whole theory.  
It is remarkable that it requires no axiom concerning $\phi$.'')
\item [{\bfseries I3}] Assume that $I_1$ and $I_2$ are in ${\mathcal I}$ and 
$|I_1|<|I_2|$. We claim that there exists $x\in I_2\setminus{I_1}$
such that $I_1\cup\{x\}\in {\mathcal I}$. The case $I_1=\emptyset$
being trivial (by Prop. W.5.233), we assume that $I_1\ne\emptyset$.
\emph{We claim that $I_2$ is not a subset of $\cmq{I_1}$.}  Otherwise, for a 
contradiction, we have $\cmq{I_2}\subseteq \cmq{I_1}$ (by Prop. W.4.21, W.4.31),
and
\[
|I_2|=\dimq{I_2}\le\dimq{I_1}= |I_1|
\] 
(by property $\tau$ and W.12.21), which is contrary to assumption.
Thus, by the claim, there exists $x\in I_2\setminus \cmq{I_1}$.  By
W.4.25, we have $x\in{I_2\setminus{I_1}}$.  Recall from above that
$I_1\cup\{x\}$ is $\phi$-maximal.  Apply Lemma~\ref{lemma} with
$v=I_1$.  Hence $I_1\cup\{x\}$ is $\phi$-prime.  By definition
$I_1\cup\{x\}\in{\mathcal I}$.  This completes the proof of the third
axiom of matroids.
\end{itemize}
This shows that $(E,{\mathcal I})$ is a matroid.

Furthermore, the matroid $(E,{\mathcal{I}})$ has no loops by
Prop. W.5.233.  Furthermore, for all $x,y\in{E}$, we have
$\{x,y\}\in\mathcal{I}$ by Prop.~W.5.235 and property $(\tau)$.
Hence, by the definition of simplicity (Oxley, p12), the matroid is
simple.

Next we show that the function $u\mapsto \cmq{u}$ is the closure
operator associated with the simple matroid $(E,{\mathcal I})$.  To
accomplish this, we show that $\op{cm}_\phi$ satisfies the
characteristic properties of a closure operator in Lemma~O.1.4.3 and
Theorem~O.1.4.5.

\begin{itemize}
\item[{\bfseries CL0}] $u\in{\mathcal I}$ iff 
(for all $x\in u$, $x\notin \cmq{(u\setminus\{x\})}$); by Prop. W.5.231.
\item[{\bfseries CL1}] If $u\subseteq E$, then $u\subseteq \cmq{u}$ by Prop. W.4.25.
\item[{\bfseries CL2}] If $u\subseteq v\subseteq E$, 
then $\cmq{u}\subseteq\cmq{v}$ by Prop. W.4.21.
\item[{\bfseries CL3}] If $u\subseteq E$, then $\cmq\cmq{u}=\cmq{u}$ by Prop. W.4.31.
\item[{\bfseries CL4}] 
Assume $u\subseteq E$ and $x\in E$ and
$y\in \cmq{(u\cup\{x\})}\setminus\cmq{u}$. We claim
$x\in\cmq{(u\cup\{y\})}$.  Dismissing a trivial case, we may assume that
$u$ is nonempty.  By Prop. W.12.37, we may choose a $\phi$-prime set
$I_1\subseteq u$ that is $\phi$-equivalent to $u$. Then by
Props. W.4.43, W.4.21, we have
$y\in \cmq{(I_1\cup\{x\})}\setminus\cmq{I_1}$.  We have
$x\not\in\cmq{I_1}$. By Lemma~\ref{lemma}, $I_1\cup\{x\}$ is
$\phi$-axial, and $\dimq(I_1\cup\{x\}) = \dimq{I_1}+1$.  We have
\[
\cmq{I_1}\subsetneq\cmq{(I_1\cup\{y\})}\subseteq\cmq{(I_1\cup\{x\})}.
\]
Comparing dimensions (using Prop. W.12.21 and W.12.23), we find that
$I_1\cup\{y\}$ and $I_1\cup\{x\}$ have the same $\phi$-dimension
number and are therefore $\phi$-equivalent. This gives
\[
x \in \cmq{(I_1\cup\{x\})}=\cmq{(I_1\cup\{y\})}\subseteq\cmq{(u\cup\{y\})}.
\]
This completes the proof of CL4.  
\end{itemize}
Hence $\op{cm}_\phi$ is the closure operator $\cl$ of the matroid.

The flats of the matroid are those of the form $\cmq{u}$.  By
Whitehead's definition of the $\phi$-common region, a set is a
$\phi$-common region iff it is an intersection of $\phi$-classes.

Finally, we show that the $\phi$-dimension number is the rank of the
matroid on nonempty sets.  (In Whitehead, the $\phi$-dimension number
of the empty set is undefined.)  Let $u$ be nonempty.  Both the rank
$r$ and the $\phi$-dimension number satisfy the closure property:
\[
r(u) = r(\cl(u)),\quad \dimq(u) = \dimq(\cl(u)),
\]
by Lemma~O.1.4.2 and by the definitions of $\phi$-dimension number and
$\phi$-equivalence.  Thus, it is enough to check that the rank is
equal to the $\phi$-dimension number when $u=I\in{\mathcal I}$ is
$\phi$-prime and hence also $\phi$-axial. In this case, $I$ is an
independent set, and
\[
r(I) = |I| = \dimq{I},
\]
by Prop. O.1.3.5 and by the definition of $\phi$-axial.  This proves
that the rank is equal to the $\phi$-dimension number.
\end{proof}

The converse theorem states that every simple matroid is a finite
$\phi$-maximal geometrical system in the generalized sense of
Whitehead.  To be explicit, the converse theorem chooses the
$\phi$-classes to be flats, restricts to finite simple matroids,
replaces Whitehead's dimension axiom with finite-dimensionality
$(\nu')$, and avoids some comparisons of the theories on the
empty set.

\begin{theorem}\label{thm:2} Let $(E,{\mathcal I})$ be a simple matroid,
with finite ground set $E$ and independent sets ${\mathcal I}$.
Define the $\phi$-classes to be the flats (i.e. closed sets) of the
matroid.  Then
\begin{enumerate}
\item For any subset $u$ of the ground set $E$, the $\phi$-common region of
  $u$ is the matroid closure of $u$.
\item Two subsets $u,v$ of the ground set
$E$ are $\phi$-equivalent iff they have the same matroid closure.  
\item A nonempty subset of the ground set $E$ is $\phi$-prime iff
  it is an independent set of the matroid.
\item The $\phi$-dimension number of a nonempty subset of the ground set
  $E$ is its matroid rank.
\item If $E$ is nonempty, then $E$ is $\phi$-maximal. 
Moreover, every $\phi$-prime set is $\phi$-axial.
\item $(E,\phi)$ is geometrical in the generalized sense of Whitehead.
\end{enumerate}
\end{theorem}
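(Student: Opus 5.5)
I would prove the six items in the order listed, since each one feeds the next, and work directly from Whitehead's definitions as recalled in the remarks.

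For item 1, Whitehead's $\phi$-common region $\cmq{u}$ is the intersection of all $\phi$-classes containing $u$. Here the $\phi$-classes are the flats. The intersection of all flats containing $u$ is $\cl(u)$, because flats are closed under intersection (O.1.4) and $\cl(u)$ is the smallest flat containing $u$. Item 2 follows immediately, since $\phi$-equivalence means having equal $\phi$-common regions.

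For item 3, a nonempty $u$ is $\phi$-prime iff no proper subset $u'\subsetneq u$ has $\cl(u')=\cl(u)$. It suffices to test sets of the form $u\setminus\{x\}$, by monotonicity of closure. So $u$ is $\phi$-prime iff $x\notin\cl(u\setminus\{x\})$ for all $x\in u$. By the matroid characterization CL0 (Lemma O.1.4.3), this holds iff $u\in\mathcal I$.

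For items 4 and 5, fix a nonempty $u$. The $\phi$-prime sets $\phi$-equivalent to $u$ are, by items 2 and 3, exactly the independent sets $I$ with $\cl(I)=\cl(u)$. These are the bases of the flat $\cl(u)$, and they all have cardinality $r(\cl(u))=r(u)$. Consequently every such prime set has the maximal cardinality, so every $\phi$-prime set is $\phi$-axial. It follows that $\dimq{u}=r(u)$. $\phi$-maximality of $E$ (and of every nonempty subset) then follows from Whitehead's definition, which I read as requiring that every $\phi$-prime subset be $\phi$-axial, or equivalently that prime sets equivalent to a given set share the dimension number. The remark after Theorem~\ref{thm:E} confirms that $\phi$-maximal is designed exactly to force this, so I would check the precise clause in MW.

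Item 6 requires checking the five axioms.
\begin{itemize}
\item[$(\lambda)$] $E$ is a flat.
\item[$(\mu)$] Simplicity means there are no loops and no parallel pairs, so $\cl(\{x\})=\{x\}$ and each singleton is a flat.
\item[$(\nu')$] $\dimq E=r(E)\le|E|<\infty$, and $\cl(\emptyset)=\emptyset$ because there are no loops.
\item[$(\pi)$] Given $u$ and a $\phi$-axial (hence independent) $v\subseteq\cl(u)$, extend $v$ to a basis $B$ of the flat $\cl(u)$. Choose the extension inside $u\cup v$ (possible since $\cl(u\cup v)=\cl(u)$) and set $w=B\setminus v$. Then $v\cup w=B$ is independent, hence $\phi$-axial, and $\cl(B)=\cl(u)$. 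A small case arises when $v=\emptyset$ and $u=\emptyset$: axial sets are nonempty, so that case is vacuous, and I would handle it explicitly.
\item[$(\rho)$] By item 5, every nonempty subset is $\phi$-maximal, so $u\cup v$ is $\phi$-maximal.
\end{itemize}

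The main obstacle is not mathematical depth but faithfully matching Whitehead's exact definitions, especially of $\phi$-maximal and $\phi$-axial, and his conventions about the empty set, to the matroid notions. I would first write out those definitions precisely from MW and check each against the basis and closure facts above.
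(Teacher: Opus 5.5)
Your proposal is correct and follows essentially the same route as the paper. You identify the $\phi$-common region with the matroid closure (the intersection of flats), and you characterize $\phi$-prime sets as independent sets via the single-element-deletion criterion CL0. You then observe that the $\phi$-prime sets $\phi$-equivalent to a nonempty $u$ are exactly the bases of the closure of $u$, which gives both $\phi$-dimension $=$ rank and prime $\Rightarrow$ axial (hence $\phi$-maximality), and you verify $(\pi)$ by extending an independent set to a basis. The differences are cosmetic: you derive Whitehead's criterion W.5.231 directly from monotonicity of closure rather than citing it, and you choose the extension in $(\pi)$ inside $u\cup v$, which is slightly more than the axiom requires.
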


\begin{remark} Statement (5) shows why it is necessary
to introduce the assumption that
$E$ is $\phi$-maximal in Theorem~\ref{thm:E}.
Statement (6) is the main conclusion of the converse theorem,
showing that every simple matroid satisfies the generalized axioms of Whitehead.
\end{remark}

\begin{proof} 1. Let $u$ be a subset of the ground set $E$.
By (Oxley, p31, Ex.1c), $\cl(u)$ is the intersection of flats
containing $u$; and hence by definition, $\cl(u)=\cmq{u}$.

2. This statement follows directly from the definition of
$\phi$-equivalence and statement (1).

3. Compare Theorem~O.1.4.5 with Proposition W.5.231, which give the
same criterion for an independent set (resp. nonempty $\phi$-prime) in
terms of matroid closure (resp. $\phi$-common region). Compare CL0
above.

4. Let $u$ be a nonempty subset of the ground set.  Since the rank and
the $\phi$-dimension number both satisfy (O.1.4.2)
\[
r(u) = r(\cl(u)),\qquad \dimq u = \dimq(\cl(u)),
\]
we may assume without loss of generality that $u$ is a flat.  Let
${\mathcal P}$ be the set of subsets $I$ of $u$ that are $\phi$-prime
and $\phi$-equivalent to $u$.  We have $I\in{\mathcal P}$ iff $I$ is
an independent set whose closure is $u$.  The set ${\mathcal P}$ is
nonempty (i.e. a basis of $u$ exists by O.1.4.10), and all members of
${\mathcal P}$ have the same cardinality.  If $I\in{\mathcal P}$,
\[
r(u) = r(\cl(I)) = r(I) = |I| = \max_{I\in{\mathcal P}} |I| = \dimq u.
\]

5. We show that every $\phi$-prime set is $\phi$-axial.
Let $I$ be $\phi$-prime. Then
\[
|I| = r(I) = \dimq(I).
\]
Hence $I$ is $\phi$-axial. This implies that every nonempty subset of
$E$ is $\phi$-maximal.

6.$\lambda$. $E$ is closed by CL1 of O.1.4.3.

6.$\mu$ and 6.$\nu'$.  Every finite matroid clearly has finite
dimension.  In a simple matroid, singletons and the empty set are
closed, by the circuit test of Prop. O.1.4.11.ii.

6.$\pi$.  Let $u\subseteq E$ be closed (without loss of generality),
and let $I$ be a nonempty independent set contained in $u$.  Let $J$
be a basis of $u$ containing $I$.  The closure of $J$ is $u$, and $J$
is $\phi$-equivalent to $u$.

6.$\rho$. From (5), every nonempty subset of $E$ is $\phi$-maximal.
\end{proof}

\begin{remark} As a historical aside, we mention some significant
social connections between Whitehead and Whitney's circle of
associates, as mathematicians in the same intellectual milieu.  Of
course, these social ties have no bearing on the validity of the
theorems proved above.  Both of them were at Harvard: Whitney as a
young PhD (received in 1932), and Whitehead from 1924 until his
retirement in 1937.  Whitney's PhD advisor was George Birkhoff, who
had numerous interactions with Whitehead at Harvard.  In fact, the
Whiteheads lived two floors above the Birkhoffs on Memorial Drive, and
they ``were very congenial with
them''~\cite{birkhoff1989mathematics}. Harvard's Society of Fellows
grew out of a conversation between Whitehead and the biochemist
L. J. Henderson (who, years earlier, had helped to recruit Whitehead
to Harvard)~\cite[v2~p254]{Lowe1}.  When the Society was organized in
1933, George Birkhoff's son Garrett became one of the inaugural Junior
Fellows, and Whitehead became an inaugural Senior Fellow. Garrett
Birkhoff was an early contributor to matroid theory, publishing the
same year as Whitney in 1935~\cite{Birkhoff1935}.  Incidentally,
another of the six inaugural Junior Fellows was Quine, who received
his PhD under Whitehead at Harvard in 1932.  The Society of Fellows
met for dinner Monday nights. It is noteworthy but possibly merely
coincidental that Whitehead and Garrett Birkhoff were in regular
contact at dinners during the period when Birkhoff was writing on
matroids.
\end{remark}




\bigskip
In summary and conclusion, Alfred North Whitehead introduced a theory
of dimension in 1906 that is a precursor to the theory of simple matroids.
By generalizing his theory to finite dimensions and restricting his
theory to finite $\phi$-maximal sets, a multi-faceted correspondence
is obtained with simple matroid theory.  That correspondence
connects notions in Whitehead's theory with notions in matroid
theory such as rank, independent sets, and closure.

\newpage
\bibliographystyle{plain}
 \bibliography{ref} 
\end{document}